\documentclass[11pt]{article}
\usepackage[T1]{fontenc}
\usepackage{lmodern,microtype}
\usepackage[margin=1in]{geometry}
\usepackage{amsmath,amssymb,amsthm,mathtools}
\usepackage{booktabs,longtable,array,calc}
\usepackage[numbers,sort&compress]{natbib}
\usepackage[hidelinks]{hyperref}
\usepackage[capitalise,noabbrev]{cleveref}
\newtheorem{theorem}{Theorem}

\crefname{lemma}{Lemma}{Lemmas}
\crefname{theorem}{Theorem}{Theorems}
\crefname{proposition}{Proposition}{Propositions}
\hypersetup{pdftitle={Sharp Bounds on the Number of Small Cuts}}
\renewenvironment{abstract}
  {\begin{center}\bfseries\abstractname\end{center}\begin{quotation}\normalsize}
  {\end{quotation}}
\title{Sharp Bounds on the Number of Small Cuts}
\hypersetup{pdfauthor={Chao Xu, Mingdong Yang}}
\author{Chao Xu\thanks{School of Computer Science and Engineering, University of Electronic Science and Technology of China. Email: \href{mailto:thechaoxu@gmail.com}{\texttt{thechaoxu@gmail.com}}.} \and Mingdong Yang\thanks{School of Computer Science and Engineering, University of Electronic Science and Technology of China. Email: \href{mailto:1249776501@qq.com}{\texttt{1249776501@qq.com}}.}}
\date{}
\begin{document}
\maketitle
\begin{abstract}
Let \(\lambda\) be the minimum cut value of an \(n\)-vertex undirected multigraph. For every fixed \(\alpha>1\), we prove that there are \(O(n^{\lceil2\alpha\rceil-1})\) cuts of size strictly below \(\alpha\lambda\). The exponent is sharp. The proof combines splitting off and sampling with a bound on the size of nested families of vertex sets.
\end{abstract}
\section{Introduction}\label{sec:introduction}

For an undirected multigraph \(G=(V,E)\) and \(A\subseteq V\), let \(c(A)\) be the number of edges between \(A\) and \(V\setminus A\). Karger's tree-packing bound \citep{karger2000} gives the following theorem, also revisited by Chekuri et al. \citep{chekuriQuanrudXu2020}.

\begin{theorem}\label{thm:classical}

For every fixed \(\alpha\ge1\) and every connected undirected multigraph \(G=(V,E)\) on \(n\ge2\) vertices with minimum cut value \(\lambda\),
\[
\left|\{\varnothing\subsetneq A\subsetneq V:c(A)\le\alpha\lambda\}\right|
=O(n^{\lfloor2\alpha\rfloor}).
\]\end{theorem}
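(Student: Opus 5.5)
We follow Karger's tree-packing route, which gives exactly the exponent \(\lfloor 2\alpha\rfloor\).

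\textbf{Step 1: a tree packing.} By the Nash-Williams--Tutte theorem, \(G\) contains a packing of at least \(\lambda/2\) edge-disjoint spanning trees. Indeed, for any partition \(\mathcal P\) of \(V\) into \(r\) parts, every part has at least \(\lambda\) boundary edges. So at least \(r\lambda/2\) edges cross \(\mathcal P\), which is the Tutte--Nash-Williams condition for \(\lfloor\lambda/2\rfloor\) trees. For the fractional version we take weights \(w_T\ge0\) on spanning trees with \(\sum_T w_T=\lambda/2\) and load at most \(1\) on every edge.

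\textbf{Step 2: a tree crosses each small cut few times.} Fix a cut \(A\) with \(c(A)\le\alpha\lambda\). Every spanning tree crosses \(A\) at least once. The total weighted crossing is
\[
\sum_T w_T\,|T\cap\delta(A)| \le c(A)\le\alpha\lambda.
\]
Dividing by \(\sum_T w_T=\lambda/2\), the average number of crossings is at most \(2\alpha\). Hence some tree \(T\) with positive weight has \(|T\cap\delta(A)|\le\lfloor 2\alpha\rfloor\), since crossing numbers are integers.

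\textbf{Step 3: counting.} A cut \(A\) is determined by the set \(F=T\cap\delta(A)\) of tree edges it crosses. Removing \(F\) from \(T\) splits it into components, and \(A\) is a union of some of them. Because every edge of \(F\) must actually cross, the side assignment is forced by parity along \(T\). So \(A\) is determined by \(F\) up to complement, which gives at most \(O(n^{\lfloor2\alpha\rfloor})\) choices per tree.

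\textbf{Step 4: bounding the number of trees.} We must make the number of trees used a constant depending only on \(\alpha\). This is the main obstacle. The plan is Karger's sampling trick: draw \(s=O(1)\) trees independently from the distribution \(w_T/(\lambda/2)\). Let \(X\) be the crossing number of a sampled tree with the fixed cut. Then \(X\ge 1\) always, and \(\mathbb E[X]\le 2\alpha\).

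If \(2\alpha\) is not an integer, this gives \(\Pr[X\le\lfloor2\alpha\rfloor]\ge p(\alpha)>0\), a constant depending only on \(\alpha\). To see this, suppose the probability were tiny. Then almost all mass would sit at values \(\ge\lfloor2\alpha\rfloor+1>2\alpha\), contradicting the mean bound. The same argument works whenever \(c(A)<\alpha\lambda\) strictly.

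The boundary case \(2\alpha\) integral with \(c(A)=\alpha\lambda\) needs care. There one uses that an optimal packing has average exactly \(2\alpha\) only if all mass sits at exactly \(2\alpha\), since no tree crosses fewer times. To handle it cleanly, I would replace \(\alpha\) by a slightly larger \(\alpha'\) with \(\lfloor2\alpha'\rfloor=\lfloor2\alpha\rfloor\). This is possible whenever \(2\alpha\notin\mathbb Z\). When \(2\alpha\in\mathbb Z\), I would instead use \(\mathbb E[X]\le 2\alpha\) directly: \(\Pr[X\le 2\alpha]\ge \Pr[X\le \mathbb E X]>0\) by Markov's inequality applied to \(X-1\ge0\), which gives \(\Pr[X\ge 2\alpha+1]\le(2\alpha-1)/(2\alpha)\).

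\textbf{Conclusion.} In all cases each small cut is captured by a random tree with constant probability \(p\). Hence the expected number of small cuts captured by one tree is at least \(p\) times their total number. It is also at most \(O(n^{\lfloor2\alpha\rfloor})\) by Step 3. So the total number of small cuts is \(O(n^{\lfloor2\alpha\rfloor})/p\), with the constant depending only on \(\alpha\).
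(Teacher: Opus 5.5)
Your proposal is correct and is the standard Karger tree-packing argument that the paper cites for this theorem; the paper itself gives no proof. The boundary-case discussion is unnecessary, and its remark that no tree crosses fewer than \(2\alpha\) times is false (only \(X\ge1\) holds), but your argument does not use it: since \(\lfloor2\alpha\rfloor+1>2\alpha\) always, Markov's inequality gives \(\Pr[X\le\lfloor2\alpha\rfloor]\ge1-2\alpha/(\lfloor2\alpha\rfloor+1)>0\) for every \(\alpha\), and a single sampled tree suffices for your final averaging step.
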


We prove the strict version, resolving the open problem posed by Henzinger and Williamson and by Karger \citep{henzingerWilliamson1996, karger2000}.

\begin{theorem}\label{thm:main}

For every fixed \(\alpha\ge1\) and every connected undirected multigraph \(G=(V,E)\) on \(n\ge2\) vertices with minimum cut value \(\lambda\),
\[
\left|\{\varnothing\subsetneq A\subsetneq V:c(A)<\alpha\lambda\}\right|
=O(n^{\lceil2\alpha\rceil-1}).
\]\end{theorem}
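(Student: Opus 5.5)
The plan is to split on whether \(2\alpha\) is an integer. If \(2\alpha\notin\mathbb Z\), then \(\lceil2\alpha\rceil-1=\lfloor2\alpha\rfloor\), and \cref{thm:main} follows immediately from \cref{thm:classical}, since \(c(A)<\alpha\lambda\) implies \(c(A)\le\alpha\lambda\). So the real content is the case \(2\alpha=k\in\mathbb Z_{\ge2}\). There we must show that cuts of value strictly below \(k\lambda/2\) number \(O(n^{k-1})\), whereas \cref{thm:classical} only gives \(O(n^{k})\).

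I would fix a small constant \(\varepsilon=\varepsilon(k)>0\) and split the cuts into two ranges.

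\textbf{Low range, \(c(A)\le(k/2-\varepsilon)\lambda\).} Here \(\lfloor 2(k/2-\varepsilon)\rfloor=k-1\). Applying \cref{thm:classical} with the fixed parameter \(\alpha'=k/2-\varepsilon\) gives \(O(n^{k-1})\) such cuts, with a constant depending only on \(k\). Everything therefore reduces to the \emph{near-threshold} cuts, those with \((k/2-\varepsilon)\lambda<c(A)<k\lambda/2\).

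\textbf{Tree-packing heuristic for the near-threshold range.} By Nash-Williams--Tutte there is a fractional packing of spanning trees of total weight \(\lambda/2\). For a cut with \(c(A)<k\lambda/2\), the weighted average number of tree edges crossing \(A\) is \(<k\), so some tree crosses \(A\) at most \(k-1\) times. Each tree has only \(O(n^{k-1})\) cuts crossed by at most \(k-1\) of its edges. The difficulty is that the weight of the good trees can be as small as about \(1/\lambda\), so Karger's argument (sampling \(O(1)\) trees) does not go through uniformly in \(\lambda\).

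\textbf{Removing the dependence on \(\lambda\).} To get a uniform constant, I would first apply Mader's splitting-off at a vertex to reduce to a more structured graph. The goal is for near-threshold cuts to correspond to cuts that are tight or nearly tight, both with respect to the packing and with respect to a degree-\(\lambda\) structure. I would then sample edges at rate \(p=\Theta(1/\lambda)\), scaled so that the minimum cut drops to a large constant \(L\). With constant probability, a fixed near-threshold cut keeps value \(<kL/2\) while every cut stays \(\gtrsim L\). This turns the counting problem into one about a bounded-\(\lambda\) graph, where a cut of value \(<kL/2\) has value \(\le kL/2-1\), so \(\lfloor 2(k/2-1/L)\rfloor=k-1\) with a fixed margin.

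\textbf{Main obstacle: concentration near the threshold.} A constant sampling rate does not concentrate well enough to preserve the strict inequality for cuts within \(o(\lambda)\) of \(k\lambda/2\). This is where I would use the nested-family bound. I would show that, for each fixed choice of the \(\le k-2\) "crossing positions" determined by the sparse structure, the near-threshold cuts consistent with it form a laminar or nested chain of vertex sets. This should follow by uncrossing: two crossing near-threshold cuts \(A,B\) satisfy \(c(A\cap B)+c(A\cup B)\le c(A)+c(B)<k\lambda\), and together with \(c(\cdot)\ge\lambda\) this forces one of the corners to be small. A nested family has size at most \(n\), which contributes the final factor and gives \(O(n^{k-2})\cdot O(n)=O(n^{k-1})\). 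Establishing this nesting is the step I expect to be hardest. I would first try to prove it in the Eulerian or split-off setting, where tight sets uncross cleanly, and then lift it back to the original graph.
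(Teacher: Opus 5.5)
Your reduction to \(2\alpha=k\in\mathbb Z\) is correct, and so is the low-range count via \cref{thm:classical} with \(\alpha'=k/2-\varepsilon\). But all the content lies in the near-threshold range, and there your sketch has no working mechanism. You note yourself that the tree-packing count is not uniform in \(\lambda\), and that sampling at rate \(\Theta(1/\lambda)\) cannot preserve the strict inequality. The step meant to repair this is the claim that near-threshold cuts sharing \(\le k-2\) ``crossing positions'' form a chain. That claim is not defined: which sparse structure remains once tree packing is dropped, and why are there only \(O(n^{k-2})\) classes? Nor is it supported by your uncrossing argument. From \(c(A\cap B)+c(A\cup B)<k\lambda\) and \(c(\cdot)\ge\lambda\), nothing follows for \(k\ge3\) beyond one corner also lying below the threshold. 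Moreover, the argument never uses the class, so it would rule out every crossing pair of near-threshold cuts. That is false. Take \(K_4\) with the cycle edges \(12,23,34,41\) of multiplicity \(m\) and the diagonals of multiplicity \(2m-1\). Then \(\lambda=4m-1\), while the crossing sets \(\{1,2\}\) and \(\{2,3\}\) both have cut value \(6m-2\), within \(1/2\) of \(3\lambda/2\).

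The missing idea is to recurse on terminals rather than on cut values. The paper fixes a minimum adjacency ordering \(S\) of length \(p\). It counts sets \(X\subseteq V\setminus S\) with \(c(X\cup A)<t=(q+1)\lambda/2\) for \emph{every} \(A\subseteq S\), where \(q=\lceil2\alpha\rceil-1\). It then splits off the vertex \(x\notin S\) minimizing \(w(x,S)\), using the refined splitting lemma so that \(G[S]\) and all local connectivities are kept. This gives \(f(n,p)\le f(n-1,p)+f(n,p+1)\), which produces the factor \(n^{q-p}\). At \(p=q\), the \(2^q\) simultaneous constraints bound the family by a constant. By Mirsky's theorem it suffices to bound chains and antichains. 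Chains have length at most \(2q\) by the minimum adjacency inequality \(\eta\le(q-1)w(S,v)\). Antichain members are distinguished by \(O(\log a)\) vertices sampled with probability proportional to \(w(S,v)\). Splitting off all other vertices and applying \cref{thm:classical} to that small graph gives \(a=O((\log a)^{q+1})\). So nested families do enter, but with constant length at the base of the recursion, not length \(n\) per class.
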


The strict bound extends to nonnegative real edge weights by standard approximation and scaling arguments.

Compared with \cref{thm:classical}, the exponent improves by one precisely when \(2\alpha\in\mathbb Z\). In \cref{sec:applications}, we apply this improvement to path TSP \citep{zenklusen2019}, network-design LP separation \citep{ibrahimpurVegh2026, chakrabartyChekuriKhannaKorula2015}, and reliability \citep{karger1999}.

Nagamochi et al.~proved that \(c(A)<4\lambda/3\) implies a count of at most \(n(n-1)\) \citep{nagamochiNishimuraIbaraki1997}. Although \cref{thm:classical} gives \(O(n^2)\) even for \(c(A)\le4\lambda/3\), their exact bound fails at equality: \(K_4\) has \(\lambda=3\) and fourteen nonempty proper sets with \(c(A)\le4\), exceeding \(4\cdot3=12\). Their recurrence tracks sets \(X\) for which \(c(X)\) and \(c(X\cup\{s\})\) are both below the threshold, for a designated vertex \(s\) \citep{nagamochiNishimuraIbaraki1997}. Henzinger and Williamson iterated the recurrence to two terminals \(s_1,s_2\) and bounded the sets \(X\) for which \(c(X\cup A)<3\lambda/2\) for every \(A\subseteq\{s_1,s_2\}\), obtaining \(O(n^2)\) cuts with \(c(A)<3\lambda/2\) \citep{henzingerWilliamson1996}.

We extend this recurrence to every fixed number of terminals by bounding nested families and families of pairwise incomparable sets. For the latter, the main new step samples vertices that distinguish the sets. We then split off all other vertices using \cref{thm:splitting} and apply \cref{thm:classical} to the smaller graph. For graph sparsification, splitting off has also been used to count intersections of cuts with edges of large local connectivity \citep{fungHariharanHarveyPanigrahi2019}.

Henzinger and Williamson's parallel-path construction shows that the exponent is sharp for every fixed \(\alpha>1\) \citep{henzingerWilliamson1996}.

\section{Preliminaries}\label{sec:preliminaries}

The proof uses unweighted multigraphs \(G=(V,E)\), with loops discarded. For disjoint vertex sets \(A,B\), let \(w(A,B)\) count the edges between them. Write \(\lambda(G)\) for the minimum cut value and \(\lambda_G(u,v)\) for local edge connectivity between \(u\) and \(v\). Vertices also denote their singleton sets, and graph subscripts specify the graph when several occur. All logarithms are natural. We use the cut inequality
\begin{equation}\label{eq:cut-posimodularity}
c(A\setminus B)+c(B\setminus A)\le c(A)+c(B).
\end{equation}

Splitting off two edges \(ux,xv\) replaces them by \(uv\). A complete splitting off at \(x\) pairs all edges incident to \(x\) and deletes \(x\). The resulting graph \(G'\) satisfies
\begin{equation}\label{eq:split-monotonicity}
c_{G'}(A\setminus\{x\})\le c_G(A)\qquad(A\subseteq V).
\end{equation}
The following refinement of Lovász--Mader splitting off \citep{lovasz1976, mader1978} follows from Frank's criterion \citep{frank1992}.

\begin{theorem}[Refined splitting off]\label{thm:splitting}

Let \(G=(V,E)\) be a connected Eulerian multigraph with \(|V|\ge3\), let \(x\in V\), and let \(S\subsetneq V\setminus\{x\}\) satisfy \(w_G(x,S)<c_G(x)/2\). There is a complete splitting off at \(x\) producing an Eulerian multigraph \(G'=(V\setminus\{x\},E')\) such that \(G'[S]=G[S]\) and
\[
\lambda_{G'}(u,v)=\lambda_G(u,v)\qquad(u,v\in V\setminus\{x\},\ u\ne v).
\]\end{theorem}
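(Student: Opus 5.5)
The plan is to build the complete splitting off at \(x\) one pair at a time. Every split must be \emph{admissible}, meaning it preserves \(\lambda(u,v)\) for all \(u,v\in V\setminus\{x\}\). We add one extra rule: as long as \(x\) still has an edge into \(S\), the next split pairs such an edge \(xs\) (\(s\in S\)) with an edge \(xt\) where \(t\notin S\cup\{x\}\). Each such split creates an edge \(st\) with only one end in \(S\), so \(G[S]\) is untouched. Splitting never destroys edges inside \(S\) either.

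The hypothesis propagates as follows. \(G\) is Eulerian, so \(c(x)\) is even, and \(w(x,S)<c(x)/2\) gives \(w(x,S)\le c(x)/2-1\). One split lowers \(w(x,S)\) by \(1\) and \(c(x)\) by \(2\), so afterwards
\[
w(x,S)-1\le \frac{c(x)-2}{2}-1<\frac{c(x)-2}{2}.
\]
Hence the strict inequality survives every step. Each admissible split also keeps the graph Eulerian and keeps all local connectivities among \(V\setminus\{x\}\).

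Once no edge joins \(x\) to \(S\), we finish with Mader's theorem. An Eulerian graph has no cut edge and \(c(x)\) is even, so \(c(x)\neq3\). Mader's theorem therefore lets us keep splitting admissible pairs at \(x\) until \(x\) is isolated, and then we delete \(x\). Any further edges inside \(S\) could only arise from a pair \(xs,xs'\) with \(s,s'\in S\), which cannot occur now. The result is \(G'\), and it satisfies \(G'[S]=G[S]\) together with the connectivity condition of the statement.

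The key step is existence: whenever \(x\) has a neighbour \(s\in S\), there is a neighbour \(t\notin S\cup\{x\}\) for which splitting \(xs,xt\) is admissible. I use Frank's criterion in the following form. Let \(R(X)=\max\{\lambda(a,b): a\in X,\ b\notin X\cup\{x\}\}\). Call \(X\subseteq V\setminus\{x\}\) \emph{dangerous} if \(c(X)\le R(X)+1\). Then the pair \(xs,xt\) is admissible if and only if no dangerous set contains both \(s\) and \(t\).

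Every dangerous set \(X\) satisfies \(w(x,X)\le c(x)/2\). To see this, note that \(c(X\cup\{x\})\ge R(X)\), because the pairs defining \(R(X)\) are still separated by \(X\cup\{x\}\). Since \(c(X\cup\{x\})=c(X)+c(x)-2w(x,X)\), this gives
\[
2w(x,X)\le c(X)-R(X)+c(x)\le c(x)+1,
\]
and parity then yields \(w(x,X)\le c(x)/2\).

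Next I invoke Frank's structural lemma for a fixed edge \(xs\): all non-admissible partners of \(s\) lie in one maximal dangerous set \(X\ni s\). This comes from uncrossing two intersecting dangerous sets through \eqref{eq:cut-posimodularity} and the submodularity of \(c\). If every edge from \(x\) to \(V\setminus(S\cup\{x\})\) were blocked, then \(X\) would contain all neighbours of \(x\) outside \(S\), and so \(X\cup S\) would carry every edge at \(x\).

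The main obstacle is turning that picture into a contradiction. The bounds \(w(x,X)\le c(x)/2\) and \(w(x,S)<c(x)/2\) are not enough on their own, since \(X\) and \(S\) may overlap. My first attempt is to apply posimodularity to \(X\) and \(V\setminus(S\cup\{x\})\), and to use that \(X\) is dangerous. The aim is to show that \(X\setminus S\) stays tight while receiving more than half of the edges at \(x\), which contradicts the bound \(w(x,X)\le c(x)/2\).

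If that fails, the fallback is to choose the edge \(xs\) with some care. I would pick \(s\in S\) whose blocking set \(X_s\) is inclusion-minimal, and argue that the blocking sets for different choices of \(s\) can be uncrossed. Together they would then cover all the edges at \(x\), giving a contradiction with the strict inequality \(w(x,S)<c(x)/2\).
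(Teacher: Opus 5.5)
\textbf{Gap at the key existence step.} Your plan is the paper's plan. The paper asserts that every edge at \(x\) has at least \(c(x)/2\) admissible partners. It then pairs each edge into \(S\) with such a partner outside \(S\), keeping \(c(x)-2w(x,S)>0\) invariant, and splits the rest outside \(S\). Your two facts produce exactly this count: all blocked partners of \(xs\) lie in one maximal dangerous set \(X\ni s\), and \(w(x,X)\le c(x)/2\).

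The problem is that you stop one line short. You call the contradiction the ``main obstacle'' and assert that these bounds are not enough because \(X\) and \(S\) may overlap. They are enough. Suppose every edge from \(x\) to \(V\setminus(S\cup\{x\})\) were blocked. Then \(X\) contains every neighbour of \(x\) outside \(S\), so
\[
w(x,X)\ge w\bigl(x,V\setminus(S\cup\{x\})\bigr)=c(x)-w(x,S)>\frac{c(x)}{2},
\]
which contradicts \(w(x,X)\le c(x)/2\). Overlap with \(S\) only makes \(w(x,X)\) larger. Neither the posimodularity attempt with \(X\setminus S\) nor the fallback of uncrossing blocking sets for different \(s\) is needed. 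As written, however, the proposal leaves its central step unproved.

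\textbf{Justifying the structural lemma.} This lemma needs a real argument. It does not follow from submodularity and posimodularity of \(c\) alone, and it is false without parity. In \(K_4\), the blocked partners of an edge \(xa\) are covered by the dangerous sets \(\{a,b\}\) and \(\{a,c\}\), but no single dangerous set contains \(a\), \(b\) and \(c\).

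What makes it work here is that in an Eulerian graph all cuts and all local connectivities are even. Hence dangerous sets are tight: \(c(X)=R(X)\). Now take tight \(X,Y\ni s\). Skew-supermodularity of \(R\) leaves two cases.
\begin{itemize}
\item If \(R(X)+R(Y)\le R(X\cap Y)+R(X\cup Y)\), then submodularity of \(c\) makes \(X\cup Y\) tight.
\item If \(R(X)+R(Y)\le R(X\setminus Y)+R(Y\setminus X)\), this is impossible. The edge \(xs\) leaves \(X\cap Y\) toward \(x\notin X\cup Y\), so \(c(X\setminus Y)+c(Y\setminus X)\le c(X)+c(Y)-2\).
\end{itemize}
With this in place, each edge \(xs\) has at least \(c(x)-w(x,X)\ge c(x)/2\) admissible partners. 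This also makes your final appeal to Mader's theorem unnecessary.
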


\begin{proof}

Every edge at \(x\) has at least \(c(x)/2\) other incident edges with which it can be split while preserving all local edge connectivities on \(V\setminus\{x\}\). Bang-Jensen et al.~give the same bound on choices when preserving a common even lower bound on all local edge connectivities \citep{bangJensenGabowJordanSzigeti1999}. Pair each edge from \(x\) to \(S\) with such an edge whose other endpoint lies outside \(S\), preserving \(c(x)-2w(x,S)>0\). The remaining edges at \(x\) can then be split off outside \(S\).\end{proof}

Hence \(\lambda(G')\ge\lambda(G)\).

A minimum adjacency ordering \(S=(s_1,\ldots,s_p)\) in \(G\) satisfies
\[
s_i\in\operatorname*{arg\,min}_{v\in V\setminus\{s_1,\ldots,s_{i-1}\}}
w(v,\{s_1,\ldots,s_{i-1}\}).
\]
Its vertices are the terminals. We allow prefixes, also use \(S\) for its vertex set, and write \(S+x\) for the ordering with \(x\) appended.

For \(p\ge2\), let \(\eta\) count the edges inside \(S\). Summing the defining inequalities gives
\begin{equation}\label{eq:minimum-adjacency}
\eta=\sum_{i=2}^p w(s_i,\{s_1,\ldots,s_{i-1}\})
\le(p-1)w(S,v)\qquad(v\in V\setminus S).
\end{equation}

\section{Proof of the main theorem}\label{sec:proof}

\begin{proof}

The case \(\alpha=1\) is empty. Doubling every edge, we may assume that \(G\) is Eulerian. Set \(q:=\lceil2\alpha\rceil-1\ge2\) and \(t:=(q+1)\lambda/2\). We keep \(\lambda\) and \(t\) fixed throughout.

For \(n\ge2\) and \(0\le p\le\min\{q,n\}\), let \(f(n,p)\) be the maximum size of
\[
\mathcal F:=\left\{X\subseteq V\setminus S:
c(X\cup A)<t\ \text{for every }A\subseteq S\right\}
\]
over connected Eulerian multigraphs \(G=(V,E)\) on \(n\) vertices with minimum cut at least \(\lambda\) and minimum adjacency orderings \(S\) of length \(p\). We claim that
\begin{equation}\label{eq:counting-bound}
f(n,p)\le\gamma n^{q-p}.
\end{equation}
Here \(\gamma\) depends only on \(q\). The case \(p=0\) proves the theorem, since \(\alpha\lambda\le t\).

It suffices to consider \(n\) sufficiently large in terms of \(q\) and \(f(n,p)>0\). Choose \(G,S\) with \(|\mathcal F|=f(n,p)\), and put \(U:=V\setminus S\) and \(b:=c_G(S)\). For \(X\in\mathcal F\), \cref{eq:cut-posimodularity} gives
\[
b\le c_G(X)+c_G(X\cup S)<2t=(q+1)\lambda.
\]

\textbf{Case \(p<q\).} We reduce the count to a graph with one fewer vertex and a family with one more terminal. Choose \(x\in U\) minimizing \(w_G(x,S)\). Then
\[
w_G(x,S)\le\frac{b}{n-p}<\frac{\lambda}{2}\le\frac{c_G(x)}2.
\]
Apply \cref{thm:splitting} to obtain \(G'=(V\setminus\{x\},E')\), whose minimum cut is still at least \(\lambda\). The equality \(G'[S]=G[S]\) and retention of all original edges in \(G-x\) preserve the minimum adjacency inequalities for \(S\) in \(G'\). The choice of \(x\) also makes \(S+x\) a minimum adjacency ordering in \(G\).

By \cref{eq:split-monotonicity}, deleting \(x\) sends each member of \(\mathcal F\) to a set \(Y\) with \(c_{G'}(Y\cup A)<t\) for every \(A\subseteq S\). An image \(Y\) has two preimages precisely when \(c_G(Y\cup A)<t\) for every \(A\subseteq S\cup\{x\}\). Hence
\begin{equation}\label{eq:recurrence}
f(n,p)\le f(n-1,p)+f(n,p+1).
\end{equation}

\textbf{Case \(p=q\).} By Mirsky's theorem \citep{mirsky1971}, the size of \(\mathcal F\) is at most the product of its maximum chain and antichain sizes, under inclusion. We bound chains directly and antichains by sampling.

Put \(H:=G[U]\), and let \(\eta\) count the edges inside \(S\). For \(X\subseteq U\), we have \(c_G(X)=c_H(X)+w_G(S,X)\). For \(X\in\mathcal F\), averaging over \(A\subseteq S\) gives
\begin{equation}\label{eq:terminal-costs}
c_H(X)+\frac{b+\eta}{2}<t.
\end{equation}
Summing the degrees in \(G\) of vertices in \(S\) gives \(b+2\eta=\sum_{s\in S}c_G(s)\ge q\lambda\). Thus \cref{eq:terminal-costs} implies \(c_H(X)<(\lambda+\eta)/2\). Summing \cref{eq:minimum-adjacency} over \(U\) gives \(\eta\le(q-1)b/(n-q)<\lambda/2\).

\textbf{Chains.} Following Henzinger and Williamson \citep{henzingerWilliamson1996}, consider \(X\subsetneq Y\) in \(\mathcal F\). Each boundary edge of \(Y\setminus X\) in \(H\) crosses \(X\) or \(Y\), so
\[
\lambda\le c_H(X)+c_H(Y)+w_G(S,Y\setminus X).
\]
The bounds on \(c_G(X),c_G(Y)\) and on \(c_G(X\cup S),c_G(Y\cup S)\) give, respectively,
\begin{equation}\label{eq:chain}
w_G(S,X)<\frac{q\lambda}{2},\qquad
w_G(S,Y)>b-\frac{q\lambda}{2}.
\end{equation}
For a chain \(X_1\subsetneq\cdots\subsetneq X_h\) with \(h\ge4\), the set \(X_{h-1}\setminus X_2\) contains at least \(h-3\) vertices. By \cref{eq:minimum-adjacency} and \cref{eq:chain},
\[
\frac{(h-3)\eta}{q-1}
\le w_G(S,X_{h-1}\setminus X_2)
<q\lambda-b\le2\eta.
\]
This forces \(\eta>0\) and \(h\le2q\).

\textbf{Antichains.} Let \(\mathcal A\) be a largest antichain in \(\mathcal F\), and put \(a:=|\mathcal A|\). Assume \(a\ge3\). For distinct \(X,Y\in\mathcal A\), applying \cref{eq:cut-posimodularity} in \(H\) gives
\[
\begin{aligned}
w_G(S,X\mathbin\triangle Y)
&\ge2\lambda-c_H(X\setminus Y)-c_H(Y\setminus X)\\
&\ge2\lambda-c_H(X)-c_H(Y)>\lambda-\eta>\lambda/2.
\end{aligned}
\]
Take \(\ell=\lceil6(q+1)\log a\rceil\) independent samples from \(U\), choosing \(v\) with probability \(w_G(S,v)/b\), and let \(Q\) be the set of sampled vertices. For distinct \(X,Y\in\mathcal A\), we have \(\Pr(X\cap Q=Y\cap Q)<\exp(-\ell/(2(q+1)))\le a^{-3}\). A union bound gives a choice of \(Q\) for which \(X\mapsto X\cap Q\) is injective on \(\mathcal A\). Since \(a\ge3\), we have \(|Q|\ge2\).

Apply \cref{thm:splitting} with \(S=\varnothing\) to split off every vertex outside \(Q\). The resulting graph has minimum cut at least \(\lambda\), and every \(X\cap Q\) for \(X\in\mathcal A\) has cut size below \(t\) by \cref{eq:split-monotonicity}. Applying \cref{thm:classical} with \(\alpha=(q+1)/2\) gives \(a\le2+O(\ell^{q+1})=O((\log a)^{q+1})\), including the possible intersections \(\varnothing\) and \(Q\). Thus \(a=O(1)\). Together with the chain bound, this gives \(f(n,q)=|\mathcal F|\le2qa=O(1)\).

\textbf{Induction.} Choose \(\gamma\) to cover \(p=q\) and bounded \(n\). We induct on \(n\), simultaneously for all \(p\). For \(p<q\), summing \cref{eq:recurrence} over \(p,\ldots,q-1\) and applying the induction hypothesis gives
\[
\begin{aligned}
f(n,p)
&\le f(n,q)+\sum_{j=p}^{q-1}f(n-1,j)\\
&\le\gamma\sum_{j=p}^{q}(n-1)^{q-j}
\le\gamma n^{q-p},
\end{aligned}
\]
where the last inequality is the binomial theorem.\end{proof}

\section{Applications}\label{sec:applications}

\textbf{Fixed-budget cuts, \(\alpha=d\).} For fixed \(d\ge2\), consider minimizing one of \(d\) nonnegative edge-cost criteria under \(d-1\) fixed budgets, with a finite positive optimal value. Beideman et al.~relate multicriteria cuts to approximate minimum cuts by scalarization \citep{beidemanChandrasekaranXu2023}. For fixed budgets, slightly increasing the budgets without changing feasibility and normalizing the costs gives every cut weight at least one and every optimal cut weight below \(d\). Thus \cref{thm:main} gives \(O(n^{2d-1})\) optimal cuts, recovering the cubic single-budget bound of Aissi et al.~by an alternative proof \citep{aissiMahjoubRavi2017}.

\textbf{Connectivity interdiction, \(\alpha=2\).} Cong and Tian's Lagrangian reduction for budgeted edge deletion reweights each edge \(e\) by \(\min\{w_e,\tau a_e\}\), where \(w_e>0\) is its original weight, \(a_e>0\) its deletion cost, and \(\tau\) an optimal Lagrange multiplier \citep{congTian2026}. For positive optimum, an optimal interdiction cut has reweighted value strictly below twice the minimum cut value of the reweighted graph. Thus \cref{thm:main} gives \(O(n^3)\) candidate cuts for the per-cut knapsack problems, supplying the uniform cubic bound used in their analysis.

\textbf{Path TSP, \(\alpha=2\).} Zenklusen's \(3/2\)-approximation enumerates the \(s\)--\(t\) cuts of a Held--Karp solution with capacity below three \citep{zenklusen2019}. Adding a unit-capacity \(st\) edge makes the minimum cut at least two and keeps these cuts below four. Applying \cref{thm:main} bounds this family by \(O(n^3)\) instead of \(O(n^4)\), giving \(O(n^4)\) states and \(O(n^8)\) LP subproblems in the dynamic program, compared with \(O(n^5)\) and \(O(n^{10})\).

\textbf{Flexible graph connectivity, \(\alpha=2\).} The \((p,q)\)-flexible graph connectivity problem requires \(p\)-edge connectivity after any \(q\) unsafe edges fail. For fractional values \(0\le x_e\le1\), LP separation assigns capacity \((p+q)x_e\) to safe edges and \(px_e\) to unsafe edges, and first checks that the minimum cut is at least \(p(p+q)\). Remaining violations occur only on cuts of capacity strictly below \(2p(p+q)\) \citep{chekuriJain2025, ibrahimpurVegh2026}. By \cref{thm:main}, the candidate family has \(O(n^3)\) cuts instead of \(O(n^4)\), uniformly in \(p,q\). In Chekuri and Jain's formulation, a violation means that deleting at most \(q\) unsafe edges from a cut leaves total fractional value below \(p\), giving the strict threshold.

\textbf{Connectivity with several safety tiers, \(\alpha=\ell\).} For a fixed number \(\ell\ge2\) of safety tiers, Ibrahimpur and Végh's separation oracle uses minimum cut at least one and enumerates cuts below \(\ell\) \citep{ibrahimpurVegh2026}. By \cref{thm:main}, its candidate family has \(O(n^{2\ell-1})\) cuts instead of \(O(n^{2\ell})\), with the same per-cut checks.

\textbf{Capacitated network design, \(\alpha=2\).} To find a minimum-cost subgraph with every cut of capacity at least \(\rho\), Chakrabarty et al.~cap edge capacities \(u_e\) at \(\rho\) and use fractional values \(x_e\) such that the graph with capacities \(u_ex_e\) has minimum cut at least \(\rho\) \citep{chakrabartyChekuriKhannaKorula2015}. Their knapsack-cover checks can be restricted to cuts below \(2\rho\), since their rounding analysis covers equality at \(2\rho\). Applying \cref{thm:main} gives \(O(n^3)\) candidate cuts instead of \(O(n^4)\), with the same \(O(\log n)\) approximation guarantee.

\textbf{Motif cuts, fixed \(\alpha>1\).} Fix a connected motif \(M\) on two or three vertices in an \(n\)-vertex directed or undirected graph \(G=(V,E)\) with nonnegative edge weights. Weight each instance by the product of its edge weights, and let \(c_M(A)\) be the total weight of instances meeting both \(A\) and its complement \citep{kapralovMakarovSilwalSohlerTardos2022}. Replacing each three-vertex instance of weight \(z\) by a triangle with edge weights \(z/2\), and each two-vertex instance by an edge of weight \(z\), preserves every \(c_M(A)\) \citep{bensonGleichLeskovec2016}. If \(\lambda_M:=\min_{\varnothing\subsetneq A\subsetneq V}c_M(A)>0\), \cref{thm:main} gives \(O(n^{\lceil2\alpha\rceil-1})\) cuts with \(c_M(A)<\alpha\lambda_M\). In particular, there are \(O(n^3)\) triangle cuts below \(2\lambda_M\).

\textbf{Network reliability, fixed \(\alpha>1\).} In an unweighted graph, suppose edges fail independently with probability \(p\) and \(p^\lambda=n^{-2-\delta}\), where \(\delta\) is bounded below by a fixed positive constant. Karger's reduction approximates the disconnection probability by the probability that all edges of some cut below \(\alpha\lambda\) fail \citep{karger1999}. The relative error is \(O(n^{2-(\alpha-1)\delta})\). For \(0<\varepsilon<1\), taking \(\alpha\ge1+2/\delta+\log(1/\varepsilon)/(\delta\log n)\) therefore gives error \(O(\varepsilon)\) \citep{harrisSrinivasan2018}. For any fixed such \(\alpha\), \cref{thm:main} bounds the number of retained cuts by \(O(n^{\lceil2\alpha\rceil-1})\). For fixed \(\delta>2\), choosing \(\alpha=2\) retains \(O(n^3)\) cuts with relative error \(O(n^{2-\delta})\).

\section*{Acknowledgements}\label{acknowledgements}
\addcontentsline{toc}{section}{Acknowledgements}

The authors used GPT-6 Astra for proof discovery and exploration and for drafting and revising the manuscript, and Fable 5.1 to review the arguments, citations, and exposition. All arguments were independently verified by the authors. The authors take full responsibility for the results and the final manuscript.
\clearpage
\bibliographystyle{plainnat}
\bibliography{references}
\end{document}